\newtheorem{theorem}{Theorem}[section]
\newtheorem{lemma}[theorem]{Lemma}
\theoremstyle{remark}
\theoremstyle{definition}
\numberwithin{equation}{section}
\numberwithin{theorem}{section}
\theoremstyle{plain}
\newtheorem*{midpoint}{Midpoint Rule}
\newtheorem*{trapezoidal}{Trapezoidal Rule}
\newtheorem*{simpsons}{Simpson's Rule}
\begin{document}



\subjclass{Primary 26D15, 65D30. Secondary 26A42, 41A55, 65D32}

\date{Preprint December 21, 2011. To appear in {\it Atlantic Electronic
Journal of Mathematics}}


\title{Simple derivation of basic quadrature formulas}


%
\author{Erik Talvila}
\address{Department of Mathematics \& Statistics\\
University of the Fraser Valley\\
Abbotsford, BC Canada V2S 7M8}
\email{Erik.Talvila@ufv.ca}

\author{Matthew Wiersma}
\address{Department of Pure Mathematics\\
University of Waterloo\\
Waterloo, ON Canada N2L 3G1}
\email{mwiersma@uwaterloo.ca}
\thanks{The first author was supported by a Discovery Grant,
the second author was supported by an Undergraduate Student Research Award; 
both from the
Natural Sciences and Engineering Research Council of Canada.
This paper was written while the
first author was on leave and visiting the Department of Mathematical and 
Statistical
Sciences, University of Alberta, and while the second author was a student at 
University of the Fraser Valley.}


%

%

\begin{abstract}
Simple proofs of the midpoint, trapezoidal and Simpson's rules are
proved for numerical integration on a compact interval.  The integrand
is assumed to be twice continuously differentiable for the midpoint
and trapezoidal rules, and to be four times continuously differentiable
for Simpson's rule.  Errors are estimated in terms of the uniform
norm of second or fourth derivatives of the integrand.  
The proof uses only integration by parts, applied to
the second or fourth derivative of the integrand, multiplied by an
appropriate polynomial or piecewise polynomial function.  A corrected
trapezoidal rule that includes the first derivative of the integrand at the
endpoints of the integration interval is also proved in this manner, 
the coefficient in the
error estimate
being smaller than for the midpoint and trapezoidal rules.  
The proofs are suitable
for presentation in a calculus or elementary numerical analysis class.
Several student projects are suggested.
\end{abstract}

\maketitle

\let\oldsqrt\sqrt
\def\sqrt{\mathpalette\DHLhksqrt}
\def\DHLhksqrt#1#2{%
\setbox0=\hbox{$#1\oldsqrt{#2\,}$}\dimen0=\ht0
\advance\dimen0-0.2\ht0
\setbox2=\hbox{\vrule height\ht0 depth -\dimen0}%
{\box0\lower0.4pt\box2}}

\newcounter{projects}
\newcommand{\fn}{\!:\!}
\newcommand{\bv}{{\mathcal BV}}
\newcommand{\be}{\begin{equation}}
\newcommand{\ee}{\end{equation}}
\providecommand{\abs}[1]{\lvert#1\rvert}
\providecommand{\norm}[1]{\lVert#1\rVert}
\newcommand{\intab}{\int_a^b}
\newcommand{\polyk}{{\mathcal P}_k}
\newcommand{\poly}[1]{{\mathcal P}_{#1}}
\newcommand{\R}{{\mathbb R}}
\section{Introduction}
Virtually every calculus text contains a section on numerical integration.  
Typically,
the midpoint, trapezoidal and Simpson's rules are given.  Derivation of
these quadrature formulas are usually presented, often in a graphical manner, 
but most texts
shy away from giving proofs of the
error estimates.  For example, according to \cite{peterson}, the book
{\it Calculus}, by James Stewart,
currently outsells all other calculus
texts combined in North America.  
This astonishingly popular middle brow book gives error formulas 
for the midpoint, trapezoidal and 
Simpson's rules but provides no proofs \cite{stewart}.
In this paper we give simple proofs of these three
basic quadrature rules and also a modified trapezoidal rule that includes first
derivative terms and has a smaller error estimate than the usual midpoint and
trapezoidal rules (Theorem~\ref{theoremcorrectedtrapezoidal}).  The
proofs are based on integration by parts of $\intab f''(x)p(x)\,dx$
or $\intab f^{(4)}(x)p(x)\,dx$,
where $\intab f(x)\,dx$ is the integral the rule applies to and
$p$ is a polynomial or piecewise polynomial.  Some elementary optimisation
is also required.  The proofs of these four rules are all easy enough for
a standard calculus course.

This paper will also be useful for a numerical analysis
class.  The proofs are self-contained except for an elementary lemma
on polynomials (Lemma~\ref{lemma}).  We feel they are much simpler than methods
usually employed in such courses.  These often involve developing
the theory of polynomial interpolation or special versions of
the mean value theorem.  Our proofs are constructive.  For the
midpoint and trapezoidal rules they
begin with $\intab f''(x)p(x)\,dx$, where $p$ is a generic, monic quadratic
or piecewise quadratic function.  For Simpson's rule we begin
with $\intab f^{(4)}(x)p(x)\,dx$, where $p$ is a monic,
piecewise quartic function. 
After integration by parts it is
clear what $p$ has to be.  For example,
upon integrating by parts, one easily sees that the midpoint rule arises
when 
$p(x)=(x-a)^2$ for $a\leq x\leq c$
and $p(x)=(x-b)^2$ for $c\leq x\leq b$.  See Section~\ref{sectionmidpoint}.
This makes it easy to produce new quadrature formulas.  Our corrected
trapezoidal rule, Theorem~\ref{theoremcorrectedtrapezoidal}, is
constructed so that the error is proportional to $(b-a)^3\norm{f''}_\infty$
and the constant of proportionality is the smallest possible.  The
method we use appears in \cite{hildebrand} and
\cite{cruzuribeJIPAM}. 
Both these sources 
give references to earlier practitioners of this
method, such as Peano and von Mises.

In Section~\ref{sectionclassroom}, we list
a number of exercises, problems and projects.  Some are at the
calculus level but most are at the level of an undergraduate
numerical analysis class.

We will consider numerical approximation of $\int_a^b f(x)\,dx$, under the
assumption that $f$ and its derivatives can be computed.  For the midpoint
and trapezoidal rules we assume $f\in C^2([a,b])$ ($f$ and its derivatives
to order $2$ are continuous on interval $[a,b]$).  For Simpson's rule
we assume $f\in C^4([a,b])$.  Error estimates will be obtained from 
integrals of the form $\intab f^{(m)}(x)p(x)\,dx$ where $p$ is a polynomial
or piecewise polynomial and $m$ is $2$ or $4$.  
Thus, all integrals that appear can be considered
as Riemann integrals.  In Section~\ref{sectionclassroom}, projects 
\ref{relaxing}, \ref{pnorms} and \ref{hk} discuss how assumptions
on $f$ can be weakened somewhat and then errors can be given in terms of
Lebesgue or Henstock--Kurzweil integrals.

The usual midpoint, trapezoidal and Simpson's rules are as follows.
Let $n$ be a natural number.  For $0\leq i\leq n$ define $x_i=a+
(b-a)i/n$.  The midpoint of interval $[x_{i-1},x_i]$ is 
$y_i=a+(b-a)(2i-1)/(2n)$.  The symbol $\norm{f}_\infty$ is the
uniform norm of $f$ and denotes the
supremum of $\abs{f(x)}$ for $x\in[a,b]$.  If $f$ is continuous then
this is the maximum of $\abs{f(x)}$. 
\begin{midpoint}
Let $f\in C^2([a,b])$.  Write 
\begin{equation}
\intab f(x)\,dx=(b-a)f((a+b)/2) + E^M(f).
\end{equation}
Then $|E^M(f)|\leq (b-a)^3\norm{f''}_\infty/24$.
The composite midpoint rule is
$$
\intab f(x)\,dx=\frac{b-a}{n}\sum_{i=1}^n f(y_i) + E^M_n(f),
\text{ with } \abs{E^M_n(f)}\leq\frac{(b-a)^3\norm{f''}_\infty}{24n^2}.
$$
\end{midpoint}
\begin{trapezoidal}
Let $f\in C^2([a,b])$. Write
\begin{equation}
\intab f(x)\,dx=\frac{b-a}{2}\left[f(a)+f(b)\right]
+E^{T}(f).
\end{equation}
Then $|E^{T}(f)|\leq (b-a)^3\norm{f''}_\infty/12$.
The composite trapezoidal rule is
$$
\intab f(x)\,dx=\frac{b-a}{2n}\left[f(a)+2\sum_{i=1}^{n-1}f(x_i) +f(b)\right] + 
E^T_n(f),
\text{ with } \abs{E^T_n(f)}\leq\frac{(b-a)^3\norm{f''}_\infty}{12n^2}.
$$
\end{trapezoidal}
\begin{simpsons}
Let $f\in C^4([a,b])$.  Write
\begin{equation}
\intab f(x)\,dx=\frac{b-a}{6}\left[f(a)+4f((a+b)/2)+f(b)\right]
+
E^S(f).\label{simpsonsrule}
\end{equation}
Then $|E^S(f)|\leq (b-a)^5\norm{f^{(4)}}_\infty/2880$.
Let $n$ be even.  The composite Simpson's rule is
\begin{equation}
\intab f(x)\,dx=\frac{b-a}{3n}\left[f(a)+2\sum_{i=1}^{n/2-1}f(x_{2i})
+4\sum_{i=1}^{n/2}f(x_{2i-1})+f(b)\right]
+
E^S_n(f),
\end{equation}
with $\abs{E^S_n(f)}\leq(b-a)^5\norm{f^{(4)}}_\infty/(180n^4)$.
\end{simpsons}
Many authors give the error for the trapezoidal rule as
$E^T_n(f)=(b-a)^3f''(\xi)/(12n^2)$,
where $\xi$ is some point
in $[a,b]$.  There are similar forms for the other rules.
We don't find these any more useful than the uniform norm estimates.
Unless we know something about $f$ beyond continuity of its
derivatives, it is impossible to say what $\xi$ is.

Note that the approximation in Simpson's rule can be written
$$
\intab f(x)\,dx\doteq\frac{b-a}{3n}\left[f(a)+4f(x_1)+2f(x_2)+\cdots
+2f(x_{n-2})+4f(x_{n-1})+f(b)\right].
$$
Proofs of these three rules are given in Sections~\ref{sectionmidpoint},
\ref{sectiontrapezoidal} and \ref{sectionsimpsons}, respectively.

The literature on these formulas is vast.  Here is a sample of some
of the different methods of proof that have been published in 
calculus texts.
There are proofs based on the mean value theorem
and Rolle's theorem \cite{lightstone}, and polynomial interpolation
\cite{apostol}.
Several authors produce a somewhat mystical auxiliary function and
employ the mean value theorem or intermediate value theorem with
integration by parts.
For example, \cite{kaplan}, \cite{olmsted}.
All of the methods listed above appear in several sources.

There are many elementary journal articles
that treat numerical integration.  For
a geometrical version of the midpoint rule, see  Hammer \cite{hammer}.
Cruz-Uribe and  Neugebauer \cite{cruzuribemathmag} give a basic
proof of the trapezoidal rule using integration by parts.
Rozema \cite{rozema} shows how to estimate the error for the trapezoidal rule,
Simpson's rule and various versions of these rule that are
corrected with derivative terms.
Hart \cite{hart} also considers corrected versions of the
trapezoidal rule.
Talman
\cite{talman}
proves Simpson's rule by using an extended
version of the mean value theorem for integrals.  
For other commentary
on Simpson's rule, see \cite{shipp} and \cite{velleman}.

For a numerical analysis course, integration of polynomial interpolation
approximations is frequently used.  See \cite{conteboor}.
See \cite{hildebrand} for proofs based on the difference calculus.
For Taylor series, \cite{thompson}.  The elementary textbook 
\cite{burdenfaires} uses a rather complicated method with Taylor
series and a weighted mean value theorem for integrals.
For more sophisticated audiences, there are proofs based on the
Euler--Maclaurin summation formula
and the Peano kernel.
See \cite{davisrabinowitz} and \cite{krylov}.
General references for numerical integration are \cite{davisrabinowitz},
\cite{engels},
\cite{krommer},
\cite{kythe},
\cite{squire},
\cite{ueberhuber} and
\cite{zwillinger}.
Several other methods can be found here.

\section{Trapezoidal rule}\label{sectiontrapezoidal}
For all of the quadrature formulas we derive, the error is estimated
from the integral $\intab f^{(m)}(x)p(x)\,dx$, where $p$ is a suitable 
polynomial or piecewise polynomial function.  

We first consider the trapezoidal rule.  The estimate is
then $\intab f(x)\,dx\doteq (b-a)[f(a)+f(b)]/2$.  
\begin{proof}
Write $p(x)=(x-\alpha)^2+\beta$,
where the constants $\alpha$ and $\beta$ are to be determined.  Assume
$f\in C^2([a,b])$.  Integrate by parts to get
\begin{eqnarray*}
\intab f''(x)p(x)\,dx & = & f'(b)p(b)-f'(a)p(a)-\intab f'(x)p'(x)\,dx\\
 & = &  f'(b)p(b)-f'(a)p(a)-f(b)p'(b)+f(a)p'(a)+\intab f(x)p''(x)\,dx.\\
\end{eqnarray*}
Since $p''=2$ we can solve for $\intab f(x)\,dx$,
\begin{equation}
\intab f(x)\,dx=\frac{1}{2}\left[-f(a)p'(a)+f(b)p'(b)+f'(a)p(a)-f'(b)p(b)\right]
+E(f),\label{basic}
\end{equation}
where $E(f)=\frac{1}{2}\intab f''(x)p(x)\,dx$.  To get the trapezoidal rule
we require $p(a)=p(b)=0$ and $-p'(a)=p'(b)=b-a$.  (Since we want the
trapezoidal rule for all such $f$, the four variables $f(a)$, $f(b)$,
$f'(a)$ and $f'(b)$ are linearly independent.)  The solution of this
overdetermined system is $\alpha=(a+b)/2$
and $\beta=-(b-a)^2/4$.  The required quadratic is then
$p(x)=[x-(a+b)/2]^2-(b-a)^2/4$.  Now we can estimate the error by
\begin{equation}
|E(f)|  \leq  \frac{1}{2}\intab|f''(x)p(x)|\,dx
 \leq  \frac{\norm{f''}_\infty}{2}\intab|p(x)|\,dx.\label{holder}
\end{equation}
To evaluate the last integral, let $h=(b-a)/2$ and note that
$$
\intab|p(x)|\,dx=\int_{-h}^h|x^2-h^2|\,dx=2\int_0^h(h^2-x^2)\,dx=4h^3/3.
$$
We then get $|E(f)|\leq (b-a)^3\norm{f''}_\infty/12$.  

Now let $n\geq 2$
and  use this
estimate on each interval $[x_{i-1},x_i]$ for $1\leq i\leq n$.  Let
$y_i$ be the midpoint of $[x_{i-1},x_i]$. We define
the piecewise quadratic function $P\fn[a,b]\to\R$ by 
$P(x)=(x-y_i)^2-(b-a)^2/(4n^2)$ if 
$x\in[x_{i-1},x_i]$ for some $1\leq i\leq n$.
Now we have $P$ continuous on $[a,b]$
with $P(x_{i-1})=P(x_i)=0$, $P'(x_i-)=(b-a)/n$ and $P'(x_i+)=-(b-a)/n$.  
For the
composite rule, \eqref{basic} gives
\begin{eqnarray*}
\intab f(x)\,dx & = & \sum_{i=1}^n\int_{x_{i-1}}^{x_i}f(x)\,dx
 \doteq \frac{(b-a)}{2n}\sum_{i=1}^n\left[f(x_{i-1})
+f(x_i)\right]\\
 & = & \frac{(b-a)}{2n}\left\{f(a)+2[f(x_1)+f(x_2)+\cdots+f(x_{n-1})]+f(b)
\right\}.
\end{eqnarray*}
Let $\Delta x=(b-a)/n$.  The error is
\begin{align*}
&|E^T_n(f)|  =  \frac{1}{2}
\left|\sum_{i=1}^n\int_{x_{i-1}}^{x_i}f''(x)P(x)\,dx\right|
\leq   \frac{\norm{f''}_\infty}{2}\sum_{i=1}^n \int_{x_{i-1}}^{x_i}\left|
(x-y_i)^2- \left(\frac{\Delta x}{2}\right)^2\right|\,dx\\
& =\norm{f''}_\infty\sum_{i=1}^n\int_0^{\Delta x/2}\left[
\left(\frac{\Delta x}{2}\right)^2
-x^2\right]dx
 = \norm{f''}_\infty\sum_{i=1}^n\frac{2}{3}
\left(\frac{\Delta x}{2}\right)^3
  =   \frac{(b-a)^3\norm{f''}_\infty}{12n^2}.
\end{align*}
\end{proof}
We consider this to be a completely elementary derivation of the trapezoidal
rule.  The method is perfectly suitable for presenting in a calculus class or
numerical 
analysis class.

Notice that $p(x)=(x-a)(x-b)$ so it is not necessary for
$f'$ to be continuous, provided $f'(x)(x-a)$ and $f'(x)(x-b)$ have
limits as $x\to a^+$ and $x\to b^-$, respectively.  In this case,
$f''$ will not be bounded so different methods will be needed to
estimate $\intab f''(x)p(x)\,dx$.  See projects~\ref{pnorms} and
\ref{unbounded} in Section~\ref{sectionclassroom}.

\section{Corrected trapezoidal rule}\label{sectioncorrectedtrap}
Quadrature rules are often constructed so that they are exact for
polynomials of a certain degree.  For example, see \cite[\S5.10]{hildebrand}.
Here we do something different.  We will minimise the coefficient
in the error estimate.
In \eqref{holder}, we have 
$|\intab f''(x)p(x)\,dx|
\leq  \norm{f''}_\infty\intab|p(x)|\,dx$.
This is a version of the H\"older inequality and it is a standard result
of functional analysis that this
is the best possible estimate over all such $f$ and $p$.  See, for example,
\cite[p.~184]{folland}.  For more on this point, 
see project~\ref{holderequality}
in Section~\ref{sectionclassroom}.  
This begs the question: What values of $\alpha$
and $\beta$ will minimise $\intab|p(x)|\,dx$?  One of the requirements that
we obtained the trapezoidal rule in the above calculation was that
the coefficients of $f'(a)$ and $f'(b)$ vanish in \eqref{basic}.  When
we choose $\alpha$ and $\beta$ to minimise $\intab|p(x)|\,dx$ the
resulting quadrature formula will have derivatives of $f$.  But who cares?
If we are assuming we can estimate $f''$ then surely we can include
first derivative terms.

We first need a lemma about polynomials that can 
minimise $\intab|p(x)|\,dx$.
\begin{lemma}\label{lemma}
Fix $k\geq 1$. Let $\polyk$ be the monic polynomials of degree $k$, with 
real coefficients. Let $I(p)=\intab|p(x)|\,dx$.
If $p\in\polyk$ minimises $I$ then $p$ 
has $k$ real roots in $[a,b]$, counting multiplicities.
\end{lemma}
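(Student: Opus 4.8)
The plan is to argue by contradiction. Assuming $p\in\polyk$ minimises $I$, I will exhibit, for any nonreal root or any real root lying outside $[a,b]$, a one-parameter perturbation of $p$ that stays monic of degree $k$ but strictly decreases $I$. Since $\deg p=k$, once every root is known to be real and to lie in $[a,b]$, the fundamental theorem of algebra supplies exactly $k$ such roots counting multiplicity, giving the claim. The mechanism in both cases is the same: I choose the perturbation so that the factor being varied keeps a constant sign on $[a,b]$, whence the absolute value drops out and the dependence of $I$ on the parameter becomes manifestly monotone.

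First I rule out nonreal roots. If $p$ has a conjugate pair of complex roots, write $p(x)=[(x-\alpha)^2+\beta^2]q(x)$ with $\beta\neq0$ and $q$ monic of degree $k-2$, and for $t\in[0,\beta^2]$ put $p_t(x)=[(x-\alpha)^2+t]q(x)$. Each $p_t$ is monic of degree $k$, and since $(x-\alpha)^2+t\geq0$,
$$
I(p_t)=\intab(x-\alpha)^2\abs{q(x)}\,dx+t\intab\abs{q(x)}\,dx .
$$
As $q$ is a nonzero polynomial, $\intab\abs{q(x)}\,dx>0$, so $I(p_t)$ is strictly increasing in $t$ and $I(p_0)<I(p_{\beta^2})=I(p)$, contradicting minimality. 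Hence all roots of $p$ are real.

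Next I push the real roots into $[a,b]$. Suppose $p$ has a root $r>b$ and write $p(x)=(x-r)q(x)$ with $q$ monic of degree $k-1$. For $s\in[b,r]$ set $p_s(x)=(x-s)q(x)$, again monic of degree $k$. Since $s\geq b\geq x$ on $[a,b]$ we have $\abs{x-s}=s-x$, so
$$
I(p_s)=\intab(s-x)\abs{q(x)}\,dx=s\intab\abs{q(x)}\,dx-\intab x\abs{q(x)}\,dx ,
$$
which is strictly increasing in $s$; thus $I(p_b)<I(p_r)=I(p)$, a contradiction. The symmetric choice (with $\abs{x-s}=x-s$ for a root $r<a$) finishes the argument. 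The only real difficulty is spotting these perturbations; after that the computations are routine, and notably no variational machinery, Chebyshev theory, or polynomial interpolation is required.
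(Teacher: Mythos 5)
Your proof is correct and is essentially the paper's own argument: in both cases one factors out the offending root and compares $I(p)$ with the strictly smaller value obtained by collapsing a conjugate pair $(x-\alpha)^2+\beta^2$ to $(x-\alpha)^2$, or by moving a root lying outside $[a,b]$ to the nearest endpoint --- your one-parameter families $p_t$ and $p_s$ merely interpolate the single comparison the paper makes in one step. As in the paper, the strict inequality rests on $\intab|q(x)|\,dx>0$ for the nonzero polynomial $q$, so there is no gap.
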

\begin{proof}
If $k=1$ evaluation of $\intab|x-c|\,dx$ shows the minimum occurs when
$c=(a+b)/2$.  This can also be seen graphically.

Now assume $k\geq 2$. If $I$ is minimised by $p\in\polyk$ and $p$
has a root that is not real then write $p(x)=[(x-c)^2+d^2]q(x)$ where
$c,d\in\R$, $d>0$ and $q\in\poly{k-2}$.  Then
$$
\intab|p(x)|\,dx  =  \intab(x-c)^2|q(x)|\,dx+d^2\intab|q(x)|\,dx
>  \intab(x-c)^2|q(x)|\,dx,
$$
contradicting the assumption that $p$ minimises $I$.  A minimising
polynomial then has $k$ real roots, counting multiplicities.

Now suppose $p\in\polyk$ minimises $I$ and $p(a-c)=0$ for some $c>0$.
Then $p(x)=(x-a+c)q(x)$ for some $q\in\poly{k-1}$.  And,
$$
\intab|p(x)|\,dx  =  \intab(x-a)|q(x)|\,dx+c\intab|q(x)|\,dx
 > \intab(x-a)|q(x)|\,dx,
$$
contradicting the assumption that $p$ minimises $I$.  Hence, $p$ cannot
have any roots that are less than $a$. A similar argument shows $p$ cannot
have roots greater than $b$. 
\end{proof}

Now we can prove the corrected trapezoidal rule.
\begin{theorem}[Corrected trapezoidal rule]\label{theoremcorrectedtrapezoidal}
Let $f\in C^2([a,b])$.  Write
\begin{equation}
\intab f(x)\,dx=\frac{b-a}{2}\left[f(a)+f(b)\right]+\frac{3(b-a)^2}{32}\left[
f'(a)-f'(b)\right]
+E^{CT}(f).\label{correctedtrapezoidal}
\end{equation}
Then $|E^{CT}(f)|\leq (b-a)^3\norm{f''}_\infty/32$.
The composite corrected trapezoidal rule is
$$
\intab f(x)\,dx=\frac{b-a}{2n}\left[f(a)+2\sum_{i=1}^{n-1}f(x_i) +f(b)\right]
+\frac{3(b-a)^2}{32n^2}\left[f'(a)-f'(b)\right] +E^{CT}_n(f),
$$
with $\abs{E^{CT}_n(f)}\leq(b-a)^3\norm{f''}_\infty/(32n^2)$.
\end{theorem}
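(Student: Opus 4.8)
The plan is to exploit the identity \eqref{basic}, which was obtained by two integrations by parts and holds for \emph{every} monic quadratic $p(x)=(x-\alpha)^2+\beta$, whatever the values of $\alpha$ and $\beta$. In the ordinary trapezoidal rule we spent this freedom forcing the coefficients of $f'(a)$ and $f'(b)$ to vanish. Here I would instead keep those first-derivative terms and spend the freedom in $\alpha,\beta$ to make the error estimate as small as possible. Since, by \eqref{holder}, the sharp bound on $E(f)=\frac12\intab f''(x)p(x)\,dx$ is $\frac12\norm{f''}_\infty\intab\abs{p(x)}\,dx$, the whole problem reduces to choosing $\alpha$ and $\beta$ so as to minimise $I(p)=\intab\abs{p(x)}\,dx$.

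To locate this minimiser I would first apply Lemma~\ref{lemma} with $k=2$: a monic quadratic minimising $I$ must have both of its roots real and lying in $[a,b]$. The functional $I$ is unchanged under reflection of the interval about its midpoint $(a+b)/2$, and $\frac12(p+\tilde p)$ is again a monic quadratic, so by convexity I may take the minimiser to be symmetric about that point, that is, $\alpha=(a+b)/2$. Writing $\beta=-\gamma^2$ with $0\le\gamma\le(b-a)/2$ (the range forced by the root condition), this leaves a single parameter and the problem becomes the one-variable minimisation of $I(\gamma)=\intab\bigl|[x-(a+b)/2]^2-\gamma^2\bigr|\,dx$.

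The main obstacle, and really the only computation of substance, is this optimisation. Setting $h=(b-a)/2$ and centring the interval at the origin, I would split $\intab\abs{p}$ at the two roots $\pm\gamma$ to remove the absolute value, obtaining a cubic polynomial in $\gamma$. Differentiating and comparing against the endpoints $\gamma=0$ and $\gamma=h$ then pins the minimum at $\gamma=h/2=(b-a)/4$, where $I$ attains the value $h^3/2=(b-a)^3/16$. The only delicacy is performing the piecewise integration correctly across the roots, but it is elementary calculus.

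With the minimiser $p(x)=[x-(a+b)/2]^2-(b-a)^2/16$ in hand, the rest is substitution. I would compute $p(a)=p(b)=3(b-a)^2/16$ together with $p'(a)=-(b-a)$ and $p'(b)=b-a$, and feed these into \eqref{basic}; the $f$-terms assemble into $\frac{b-a}{2}[f(a)+f(b)]$ and the $f'$-terms into $\frac{3(b-a)^2}{32}[f'(a)-f'(b)]$, exactly as in \eqref{correctedtrapezoidal}. The error bound then follows from $\abs{E^{CT}(f)}\le\frac12\norm{f''}_\infty\,I=(b-a)^3\norm{f''}_\infty/32$. Finally, for the composite rule I would apply this single-interval statement on each $[x_{i-1},x_i]$ with $\Delta x=(b-a)/n$: the function values build the usual trapezoidal sum, the first-derivative contributions telescope to $\frac{3(\Delta x)^2}{32}[f'(a)-f'(b)]$, and summing the $n$ error terms gives $(b-a)^3\norm{f''}_\infty/(32n^2)$, just as in the trapezoidal proof of Section~\ref{sectiontrapezoidal}.
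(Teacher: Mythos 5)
Your proposal is correct and follows essentially the same route as the paper: the identity \eqref{basic}, Lemma~\ref{lemma}, minimisation of $\intab\abs{p(x)}\,dx$ over monic quadratics to arrive at $p(x)=[x-(a+b)/2]^2-(b-a)^2/16$, substitution into \eqref{basic}, and telescoping of the derivative terms for the composite rule. The only deviations are minor: you fix $\alpha=(a+b)/2$ by a symmetrisation/convexity argument where the paper analyses the sign of $\partial q/\partial\alpha$, and you evaluate the piecewise integral before differentiating in $\gamma$, a variant the paper itself notes as an alternative to differentiating under the integral sign.
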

\begin{proof}
As in the proof of the trapezoidal rule in Section~\ref{sectiontrapezoidal},
we are led to  $\intab|p(x)|\,dx$ for $p$ a polynomial in $\poly{2}$.
But this time, we choose $p$ to minimise this integral.
Due to the lemma, we can write $p(x)=(x-\alpha)^2 - \gamma^2$ where
$a\leq \alpha-\gamma\leq \alpha+\gamma\leq b$. Then
$p$ has zeros at $\alpha\pm\gamma$, which are in $[a,b]$.
Let $q(\alpha,\gamma)=\intab|(x-\alpha)^2 - \gamma^2|\,dx=
\int_{a-\alpha}^{b-\alpha}|x^2-\gamma^2|\,dx$.  This must be
minimised over the triangular region $Q=\{(x,y)\in\R^2\mid a\leq x\leq b,
0\leq y\leq \min(x-a,b-x)\}$.  Differentiating the integral with respect
to $\alpha$, we have
\begin{eqnarray*}
\partial q(\alpha,\gamma)/\partial\alpha & = & 
|(a-\alpha)^2-\gamma^2|-|(b-\alpha)^2-\gamma^2|\\
 & = & a^2-2a\alpha-b^2+2b\alpha\\
 &  & \left\{\begin{array}{ll}
<0,\quad\text{when } a\leq \alpha<(a+b)/2\\ 
=0,\quad\text{when } \alpha=(a+b)/2\\
>0,\quad\text{when } (a+b)/2< \alpha\leq b.
\end{array}
\right.
\end{eqnarray*}
Hence, for each allowed $\gamma$ the minimum of $q$ in $Q$ occurs at
$\alpha=(a+b)/2$.  Now let 
$r(\gamma)=q((a+b)/2,\gamma)=2\int_0^h|x^2-\gamma^2|\,dx$, where
$h=(b-a)/2$.   Differentiating under the integral sign, we have
\begin{eqnarray*}
r'(\gamma) & = & -4\gamma\int_0^h{\rm sgn}(x^2-\gamma^2)\,dx
 =  -4\gamma\left(-\int_0^\gamma dx+\int_\gamma^h dx\right)\\
 & = & 8\gamma(\gamma-h/2)
\quad \left\{\begin{array}{ll}
<0,\quad\text{when } 0< \gamma<h/2\\ 
=0,\quad\text{when } \gamma= 0 \text{ or } h/2\\
>0,\quad\text{when } h/2< \gamma\leq h.
\end{array}
\right.
\end{eqnarray*}
Hence, the minimum of $r$ occurs at $\gamma=h/2=(b-a)/4$.  Now evaluate
\begin{eqnarray*}
q((a+b)/2,h/2) & = & 2\int_{0}^h|x^2-h^2/4|\,dx
 =  2h^3\left(\int_0^{1/2}(1/4-x^2)\,dx+\int_{1/2}^1(x^2-1/4)\,dx\right)\\
 & = & h^3/2=(b-a)^3/16.
\end{eqnarray*}
The minimising polynomial is then $p(x)=(x-(a+b)/2)^2-(b-a)^2/16$.
Using \eqref{basic} we have
\begin{equation}
\intab f(x)\,dx=\frac{b-a}{2}\left[f(a)+f(b)\right]+\frac{3(b-a)^2}{32}\left[
f'(a)-f'(b)\right]
+E^{CT}(f),
\end{equation}
where $|E^{CT}(f)|\leq (b-a)^3\norm{f''}_\infty/32$.

For the composite corrected trapezoidal rule, apply the above rule on
each interval $[x_{i-1},x_i]$ for $1\leq i\leq n$.  This gives
\begin{eqnarray}
\intab f(x)\,dx & \doteq &\frac{(b-a)}{2n}\sum_{i=1}^n\left[f(x_{i-1})
+f(x_i)\right]+\frac{3(b-a)^2}{32n^2}\sum_{i=1}^n\left[f'(x_{i-1})
-f'(x_i)\right]\notag\\
 & = & \frac{(b-a)}{2n}\left\{f(a)+2[f(x_1)+f(x_2)+\cdots+f(x_{n-1})]+f(b)
\right\}\notag\\
 & & \quad+\frac{3(b-a)^2}{32n^2}\left[f'(a)-f'(b)\right].\label{telescope}
\end{eqnarray}
Let $\alpha_i=(x_{i-1}+x_i)/2=y_i$ and $\gamma_i=(x_i-x_{i-1})/4=(b-a)/(4n)$.
The error estimate is
\begin{eqnarray*}
|E^{CT}(f)|\leq \frac{\norm{f''}_\infty}
{2}\sum_{i=1}^n\int_{x_{i-1}}^{x_i}|(x-\alpha_i)^2-\gamma_i^2|\,dx
\leq\frac{(b-a)^3\norm{f''}_\infty}{32n^2}.
\end{eqnarray*}
\end{proof}
In a one-variable calculus class, the minimisation problem can be done as
above but without using partial derivative notation.  Differentiating
under the integral sign with respect to $\alpha$ and $\gamma$ is justified with the Lebesgue dominated
convergence theorem since the derivative of the integrand
exists except at one point.
To avoid higher integration theory, it is easy enough to evaluate
$\intab|(x-\alpha)^2 - \gamma^2|\,dx$ before differentiating with
respect to $\alpha$ and $\gamma$.  But, as pointed out in 
project~\ref{pnorms} of Section~\ref{sectionclassroom}, 
the method used in the proof is useful for minimising
with respect to the $p$-norm of $f$.

Notice that in the composite rule the sum of derivative terms telescopes.
This means that in \eqref{telescope} only $f'(a)$ and $f'(b)$ appear.
The composite trapezoidal, midpoint and corrected trapezoidal rule all have an
error term proportional to $(b-a)^3\norm{f''}_\infty/n^2$.  The
constant of proportionality is $1/12$, $1/24$ and $1/32$, respectively.
So with the composite corrected trapezoidal rule we have a smaller 
error estimate 
but are only required to add the additional two terms
$3(b-a)^2[f'(a)-f'(b)]/(32n^2)$ to the composite trapezoidal rule.
If $n$ is reasonably large this is a negligible
amount of additional work.  If $f'$ can be computed at $a$ and $b$
this becomes an attractive quadrature rule.

The corrected trapezoidal rule given in 
Theorem~\ref{theoremcorrectedtrapezoidal}
is not the usual one that has traditionally appeared
in the literature.  For example, in
Conte and De Boor \cite{conteboor}, Davis and Rabinowitz
\cite{davisrabinowitz},
Dragomir, et al
\cite{dragomirceronesofo},
Pe\v{c}ari\`{c} and Ujevi\`{c} \cite{pecaricujevic},
and  Squire \cite{squire}, the coefficient
is $1/12$ in place of our $3/32$ in \eqref{correctedtrapezoidal}.
The error estimate $(b-a)^5\norm{f^{(4)}}_\infty/720$ is obtained by
polynomial interpolation by Conte and De Boor in \cite{conteboor} 
and with a two-point Taylor
expansion  by Davis and Rabinowitz in 
\cite{davisrabinowitz}.
Dragomir, et al \cite{dragomirceronesofo}, use Gr\"{u}ss's inequality.  
In their Lemma~2,
the error is given
with $\norm{f''}_\infty$ replaced by
$\sup_{[a,b]}f''-\inf_{[a,b]}f''$.
Pe\v{c}ari\`{c} and Ujevi\`{c} \cite{pecaricujevic} give the
error estimate as $\sqrt{3}(b-a)^3\norm{f''}_\infty/54$
in their equation (3.3).  This also appears in  
Dedi\`{c}, et al \cite{dedic}.
Cerone and Dragomir \cite{ceronedragomirtrap} 
have coefficient
$1/8$ in their equation ($3.64$) in place of our $3/32$ in 
\eqref{correctedtrapezoidal}.
Their error estimate is $(b-a)^3\norm{f''}_\infty/24$, obtained
with integration by parts.
Squire \cite{squire} gives a number of 
rules that use
derivatives but does not provide any error estimates.  It is shown in
\cite{talvilawiersma} that 
the coefficient $1/32$ in Theorem~\ref{theoremcorrectedtrapezoidal}
is the best possible.

\section{Midpoint rule}\label{sectionmidpoint}
Notice that with the composite trapezoidal rule, values of $f$ were 
brought forth  at
discontinuities in the
derivative of $p$.  For the midpoint rule we will define $p$  so that there is
a discontinuity in $p'$ at the midpoint $c=(a+b)/2$.  Assume $p$ is piecewise
monic quadratic so that it is continuous
on $[a,b]$ with $p'$ continuous
on $[a,c)$ and on $(c,b]$.
\begin{proof}
Integrating by parts twice,
\begin{align*}
&\intab f''(x)p(x)\,dx  =  \int_a^c f''(x)p(x)\,dx + \int_c^b f''(x)p(x)\,dx\\
& =  -f'(a)p(a)+f(a)p'(a)+f'(b)p(b)-f(b)p'(b)-f(c)[p'(c-)-p'(c+)]
+2\intab f(x)\,dx.
\end{align*}
For the midpoint rule we require $p(a)=p'(a)=p(b)=p'(b)=0$ and
$p'(c-)-p'(c+)=2(b-a)$.  This gives
$$
p(x)=\left\{\begin{array}{cl}
(x-a)^2, & a\leq x\leq c\\
(x-b)^2, & c\leq x\leq b.\\
\end{array}
\right.
$$
The error satisfies
$$
|E^M(f)|\leq \frac{\norm{f''}_\infty}{2}\left(\int_a^c(x-a)^2\,dx+
\int_c^b(x-b)^2\,dx\right)=\frac{\norm{f''}_\infty(b-a)^3}{24}.
$$

The composite rule follows as with the composite trapezoidal rule.
Note that  $p$ and $p'$ vanish at $a$ and $b$.  Define
$P(x)=(x-x_{i-1})^2$ for $x_{i-1}\leq x\leq y_i$ and
$P(x)=(x-x_i)^2$ for $y_{i}< x<x_i$ for $1\leq i\leq n$.
Then $P$ and $P'$ have discontinuities only at the midpoints $y_i$.
Integrating by parts $\intab f''(x)P(x)\,dx$ then gives the
composite rule.
\end{proof}

Notice that $p(x)=(x-a)^2$ for $a\leq x\leq c$
and $p(x)=(x-b)^2$ for $c\leq x\leq b$.
Hence, it is not necessary for $f$ or
$f'$ to be continuous, provided $f'(x)(x-a)^2$ and $f(x)(x-a)$ have
limits as $x\to a^+$. Similarly, as $x\to b^-$.  In this case,
$f''$ will not be bounded so different methods will be needed to
estimate $\intab f''(x)p(x)\,dx$.  See projects~\ref{pnorms} and
\ref{unbounded} in Section~\ref{sectionclassroom}.

Various versions of the midpoint rule are given in
\cite{ceronedragomirmid}.

\section{Simpson's rule}\label{sectionsimpsons}
In Simpson's rule there are function evaluations at endpoints
$a$, $b$ and at midpoint $c$.  As we saw with the midpoint rule,
when we integrate $\intab f^{(4)}(x)p(x)\,dx$, discontinuities
in $p$ and its derivatives at $c$ lead to evaluations of $f$ and its
derivatives at $c$. 
Assume that $p$ is a monic quartic
polynomial on $[a,c)$ and on $(c,b]$.  As we will now see,
the requirement that
$p\in C^2([a,b])$ determines the coefficients of $f(a)$, $f(b)$ and
$f(c)$ in Simpson's rule.  A brief explanation of this phenomenon
appears in \cite{rennie}.  It is similar to the construction of the
Green's function for ordinary differential equations.

\begin{proof}
Integrate by parts four times to get
\begin{align}
&\intab f^{(4)}(x)p(x)\,dx  =  
-f'''(a)p(a)+f'''(c)\left[p(c-)-p(c+)\right] +f'''(b)p(b)
+f''(a)p'(a)\notag\\
&-f''(c)\left[p'(c-)-p'(c+)\right] -f''(b)p'(b)
-f'(a)p''(a)+f'(c)\left[p''(c-)-p''(c+)\right]\label{simpson}\\
& +f'(b)p''(b)
+f(a)p'''(a)-f(c)\left[p'''(c-)-p'''(c+)\right] -f(b)p'''(b)
+24\intab f(x)\,dx.\notag
\end{align}
For our quadrature rule to have no evaluations of derivatives of $f$ we need
$p(a)=p'(a)=p''(a) =p(b)=p'(b)=p''(b)=0$.   
This means there are constants $d_1$ and $d_2$ such that
$$
p(x)=\left\{\begin{array}{cl}
(x-a)^3(x+d_1), & a\leq x\leq c\\
(x-b)^3(x+d_2), & c\leq x\leq b.\\
\end{array}
\right.
$$
Continuity of $p$ at $c$ requires $p(c-)=p(c+)$.  From this it follows that
$d_1+d_2=-(a+b)$.  The derivative of $p$ is
$$
p'(x)=\left\{\begin{array}{cl}
(x-a)^2(4x+3d_1-a), & a\leq x< c\\
(x-b)^2(4x+3d_2-b), & c< x\leq b.\\
\end{array}
\right.
$$
Continuity of $p'$ at $c$ requires $p'(c-)=p'(c+)$.  From this it follows that
$3(d_2-d_1)=b-a$.  Solving these two linear equations gives 
$d_1=-(a+2b)/3$ and $d_2=-(2a+b)/3$.  We now have
$$
p''(x)=\left\{\begin{array}{cl}
4(x-a)(3x-2a-b), & a\leq x< c\\
4(x-b)(3x-a-2b), & c< x\leq b.\\
\end{array}
\right.
$$ 
This shows that $p''(c-)=p''(c+)=(b-a)^2$.  So $p\in C^2([a,b])$.  Now,
$$
p'''(x)=\left\{\begin{array}{cl}
4(6x-5a-b), & a\leq x< c\\
4(6x-a-5b), & c< x\leq b.\\
\end{array}
\right.
$$ 
And, $p'''(a)=-4(b-a)$, $p'''(b)=4(b-a)$,  $p'''(c-)-p'''(c+)=16(b-a)$.
From \eqref{simpson} we get the required approximation
in \eqref{simpsonsrule}.  

The polynomial we are using is
$$
p(x)=\left\{\begin{array}{cl}
(x-a)^3(x-a/3-2b/3), & a\leq x\leq c\\
(x-b)^3(x-2a/3-b/3), & c\leq x\leq b.\\
\end{array}
\right.
$$
The error is then
$$
|E^S(f)|= \frac{1}{24}\left|\intab f^{(4)}(x)p(x)\,dx\right|
\leq \frac{\norm{f^{(4)}}_\infty}{24}\intab|p(x)|\,dx.
$$
Note that $a/3+2b/3-(a+b)/2=(b-a)/6>0$ and
$2a/3+b/3-(a+b)/2=(a-b)/6<0$.  Therefore,
$\intab|p(x)|\,dx=\int_a^c(x-a)^3(a/3+2b/3-x)\,dx+\int_c^b(b-x)^3
(x-2a/3-b/3)\,dx$.  The transformation $x\mapsto a+b-x$ shows these
last two integrals are equal.  Hence,
\begin{eqnarray*}
\intab|p(x)|\,dx & = & 2\int_a^c(x-a)^3(a/3+2b/3-x)\,dx\\
 & = & -2\int_a^c(x-a)^4\,dx+\frac{4(b-a)}{3}\int_a^c(x-a)^3\,dx\\
 & = & (b-a)^5/120.
\end{eqnarray*}
This gives Simpson's rule. 

For the composite rule it is traditional to take $n$ even, divide
$[a,b]$ into $n/2$ equal subintervals and apply Simpson's rule on each interval
$[x_{2i-2},x_{2i}]$ for $1\leq i\leq n/2$.  The approximation is
then 
\begin{eqnarray*}
\intab f(x)\,dx & = & \sum_{i=1}^{n/2}\int_{x_{2i-2}}^{x_{2i}}f(x)\,dx
 \doteq \frac{(b-a)}{3n}\sum_{i=1}^{n/2}\left[f(x_{2i-2}) +4f(x_{2i-1})
+f(x_{2i})\right]\\
 & = & \frac{(b-a)}{3n}\left[f(a)+2\sum_{i=1}^{n/2-1}f(x_{2i})
+4\sum_{i=1}^{n/2}f(x_{2i-1})+f(b)\right].
\end{eqnarray*}
The error is computed as with the trapezoidal rule.
\end{proof}

Notice that $p(x)=O((x-a)^3)$ as $x\to a^+$.  
Hence, it is not necessary for $f'$, $f''$ or
$f'''$ to be continuous, provided $f'''(x)(x-a)^3$, 
$f''(x)(x-a)^2$ and $f'(x)(x-a)$ have
limits as $x\to a^+$. Similarly, as $x\to b^-$.  In this case,
$f^{(4)}$ will not be bounded so different methods will be needed to
estimate $\intab f^{(4)}(x)p(x)\,dx$.  See projects~\ref{pnorms} and
\ref{unbounded} in Section~\ref{sectionclassroom}.

Liu uses integration by parts to prove a version of Simpson's rule
for which $f\in C^n([a,b])$ \cite{liu}.

\section{Classroom projects}\label{sectionclassroom}
The methods we have used to produce the midpoint rule, the trapezoidal
rule, the corrected trapezoidal rule and Simpson's rule are:
integration by parts, basic optimisation, and a simple fact about integrals of
polynomials (Lemma~\ref{lemma}).  We have not needed any of the
machinery mentioned in the Introduction that is often used in
other proofs.  This means our methods are well suited for use
by students.  We list below a number of topics that can be
investigated in the classroom.  Some are at the level of a calculus
course, others would make good assignments or projects in a
beginning numerical analysis course.  A few would be suitable
for a senior undergraduate research project or perhaps  an
M.Sc. project.\\

\refstepcounter{projects}
\label{firstorder}
\noindent
{\bf
\theprojects.
First order error estimates.}
In all of the above rules it is
assumed that $f''$ exists. What if $f\in C^1([a,b])$ but $f\notin C^2([a,b])$?
For example,
$f(x)=x^\alpha$ on $[0,1]$ if $1<\alpha<2$.  Then we could still derive
quadrature formulas by using one integration by parts on 
$\int_a^b f'(x)p(x)\,dx$.  We can get the trapezoidal rule if $p$ is
a linear function.  The error estimate is then
$(b-a)^2\norm{f'}_\infty/4$.  See 
\cite{buck} for a geometric proof or \cite{cruzuribemathmag} for an
integration by parts proof.  (The constant of
proportionality is misprinted as $1/2$ in 
\cite{cruzuribemathmag}.)
Taking $p$ to be piecewise linear produces
the midpoint rule with the same error.  The paper \cite{cruzuribeJIPAM}
gives several different types of error estimates based on $f'$ for the
trapezoidal and Simpson rules.\\

\refstepcounter{projects}
\label{midpointmodification}
\noindent
{\bf
\theprojects.
Midpoint modifications.}  In the midpoint rule, what happens
if we allow evaluation of $f$ or $f'$ at the endpoints and midpoint of
$[a,b]$?  How does the composite rule then compare with the 
trapezoidal rule and corrected trapezoidal rules?\\

\refstepcounter{projects}
\label{periodic}
\noindent
{\bf
\theprojects.
Periodic functions.}  If $f$ is periodic and we integrate over
one period, how do the quadrature formulas simplify? 
Note that for a periodic function, application of the
trapezoidal rule actually gives the corrected trapezoidal
rule.  A much deeper discussion can be found in \cite{davisrabinowitz}.\\

\refstepcounter{projects}
\label{higherorder}
\noindent
{\bf
\theprojects.
Higher order error estimates.}  If $f\in C^n([a,b])$ and $p$
is a monic polynomial of degree $k\geq n$ then integrate by parts on
$\intab f^{(n)}(x) p(x)\,dx$ to get other quadrature formulas.  If
$p$ is a piecewise polynomial then $f$ and its derivatives can be
made to be evaluated at discontinuities in the derivatives of $p$.
It is possible to make a systematic study of quadrature formulas
obtained in this manner.  In the corrected trapezoidal rule, the
quadratic polynomial that minimised $\intab|f''(x)p(x)|\,dx$
caused the $f'$ terms to telescope away \eqref{telescope}.  
This phenomenon can also
be investigated for higher degree polynomials.\\

\refstepcounter{projects}
\label{linearcombinations}
\noindent
{\bf
\theprojects.
Linear combinations.}  It is well known that Simpson's rule can
be obtained as a linear combination of trapezoidal rules or of
midpoint and trapezoidal rules.  Look
for other such relationships amongst the various rules discussed above.

In Romberg integration, one takes a linear combination of trapezoidal
rules with $n$ and $2n$.  This yields a quadrature formula with
improved error estimate.  This hierarchy is then repeated. 
See \cite{davisrabinowitz}.  Does the
integral form of the trapezoidal rule error show how to do this?  Can
this be
done with the corrected trapezoidal rule?\\

\refstepcounter{projects}
\label{finitedifferences}
\noindent
{\bf
\theprojects.
Finite differences.}
If $f$ was a special function defined by a definite integral or series
depending on
a parameter then it may not be feasible to compute $f'$.  Similarly if
$f$ was given by experimental data.  In such cases, we could
approximate derivatives by finite differences, $f'(x)\doteq [f(x)-f(x+h)]/h$
if $h$ is small.
Do this for the composite corrected trapezoidal rule and compute 
the resulting error.\\

\refstepcounter{projects}
\label{relaxing}
\noindent
{\bf
\theprojects.
Relaxing conditions on $f$.}  In the estimate 
$|\intab f''(x)p(x)\,dx|\leq \norm{f''}_\infty\int_a^b|p(x)|\,dx$ it
is not necessary that $f''$ be continuous.
If we use the Lebesgue integral,
the conditions on $f$ can be weakened
to $f'$ being absolutely continuous such that $f''$ is essentially bounded.
This is the same as $f'$ being Lipschitz continuous.  Similar remarks
apply for Simpson's rule and in \ref{firstorder} above.  Under the 
assumption that $f'$ is Lipschitz continuous, what do the error estimates
for the trapezoidal, corrected trapezoidal and midpoint rules become?
What Lipschitz condition could be used for Simpson's rule?\\

\refstepcounter{projects}
\label{pnorms}
\noindent
{\bf
\theprojects.
Using other Lebesgue norms to estimate the error.}  If $f''\in L^r([a,b])$
then the H\"older inequality gives 
$|\intab f''(x)p(x)\,dx|\leq \norm{f''}_r\norm{p}_s$, with 
$1/r+1/s=1$.  The case $r=\infty, s=1$ has already been used.  The cases
$1\leq r<\infty$ could be investigated.  The case $r=s=2$ serves as a
good warm up since the integral $\int_a^b|p(x)|^2\,dx$ can be
evaluated explicitly.  The minimising method from the proof of
Theorem~\ref{theoremcorrectedtrapezoidal} can be used.  
Similarly with Simpson's rule and 1. above.  See \cite{talvilawiersma} for
$p$-norm estimates for modified trapezoidal rules.\\

\refstepcounter{projects}
\label{holderequality}
\noindent
{\bf
\theprojects.
Equality in the corrected trapezoidal error.}
At the beginning of Section~\ref{sectioncorrectedtrap} we mentioned that
$|\intab f''(x)p(x)\,dx|
\leq  \norm{f''}_\infty\norm{p}_1$.  Show that for each quadratic $p$
there is a function $f\in C^1([a,b])$ such that $f''$ is piecewise
constant and 
$\intab|f''(x)p(x)|\,dx
=\norm{f''}_\infty\norm{p}_1$.
Show that for each $\epsilon>0$ there
is a function $g\in C^2([a,b])$ such that 
$|\intab g''(x)p(x)\,dx|
\geq  \norm{g''}_\infty\norm{p}_1-\epsilon$.\\

\refstepcounter{projects}
\label{geometric}
\noindent
{\bf
\theprojects.
Geometric proofs.}  Sketch the piecewise polynomial functions used
in derivation of all the above rules.
Can you find a geometric proof of the
choice of minimising polynomial in the corrected trapezoidal rule?
What about for minimising polynomials of $\norm{p}_s$?
For example, Derek Lacoursiere has observed that if $p$ is the
monic quadratic
that minimises
$\norm{p}_\infty$ then $p(a)=|p(c)|=p(b)$.\\

\refstepcounter{projects}
\label{nonuniform}
\noindent
{\bf
\theprojects.
Non-uniform partitions.} The composite rules are much simpler when
the partition is uniform.  But by taking non-uniform partitions we can
get smaller error estimates.
This will happen if smaller
subintervals are taken where $|f''|$ is large and larger subintervals are
allowed where
$|f''|$ is small. This could be done in a systematic way if, say, $f''$
was positive and decreasing.  This opens up the creation of adaptive 
algorithms.  See \cite[p.~160]{ueberhuber} for a meta algorithm on adaptive
integration.  A basic example of such an algorithm is given in 
\cite{burdenfaires}.  Rice \cite{rice} has estimated there ``are from 
are from $1$ to $10$ million algorithms that are potentially 
interesting and significantly different from one another".
Get cracking!\\

\refstepcounter{projects}
\label{subinterval}
\noindent
{\bf
\theprojects.
Error estimates on each subinterval.} By taking properties of $f$ into 
account it
is possible to get better error estimates.
Denote the characteristic
function of interval $[s,t]$ by $\chi_{[s,t]}(x)$ and this is $1$
if $x\in[s,t]$ and $0$, otherwise.
The estimate 
$\norm{f''\chi_{[x_{i-1},x_i]}}_\infty\leq\norm{f''}_\infty$ was used in
the proof of the trapezoidal rule.  (Can you see where?)
It is the best we can do
for generic $f$ such that $f''$ is bounded, since then the supremum
of $|f''|$ can occur on any subinterval.  It may be fine if
$f''(x)=\sin(1/x)$ on $[0,1]$ but is a poor estimate for $f(x)=\sqrt{x}$.
If $f''$ was positive and increasing then 
$\norm{f''\chi_{[x_{i-1}, x_i]}}_\infty = f''(x_i)<\norm{f''}_\infty$.
This estimate can then be used on each subinterval.  Similarly if
$f$ is decreasing.\\

\refstepcounter{projects}
\label{unbounded}
\noindent
{\bf
\theprojects.
Unbounded integrands.}  It is not necessary for $f'$ or $f''$ to be 
integrable.  If not, we may be able to integrate against a polynomial
with a zero of sufficient multiplicity.  For example, suppose 
$f\in C^2((0,1])$ such that
$\int_0^1 f(x)\,dx$ exists and as $x\to 0^+$ we have
$f(x)=o(1/x)$ and 
$f'(x)=o(1/x^2)$.
An example of such a function on $[0,1/2]$
is $f(x)=|\log x|^\alpha$ for each real $\alpha$.
Let $p(x)=x^2$. 
Then $\int_0^1f''(x)p(x)\,dx=f'(1)-2f(1)+2\int_0^1f(x)\,dx$. 
(This is Taylor's theorem.)  Show this
leads to a quadrature formula with error a multiple of 
$|\int_0^1f''(x)x^2\,dx|$.
If also $f''(x)=O(1/x^2)$ as $x\to 0^+$ then this integral is bounded by
$\sup_{x\in[0,1]}|f''(x)x^2|$.
There are similar results when $f(x)\sim c_1/x$ for some constant $c_1$ and
$f'(x)\sim c_2/x^2$ for some constant $c_2$.  It is easy to modify this
for higher order singularities.\\

\refstepcounter{projects}
\label{hk}
\noindent
{\bf
\theprojects.
The Henstock--Kurzweil integral.}  The error estimates all depend
on existence of $\intab f''(x)p(x)\,dx$.  There are functions that
are differentiable at each point  for which the derivative is not integrable
in the Riemann or Lebesgue sense.  An example is given by taking
$g\fn[0,1]\to\R$ as $g(x)=x^2\sin({x^{-3}})$ for $x>0$ and $g(0)=0$.
Then $g'$ exists at each point of $[0,1]$ but is not continuous at $0$.
Since the derivative is not bounded, $\int_0^1g'(x)\,dx$ does not
exist as a Riemann integral.  Since $\int_0^1|g'(x)|\,dx=\infty$, we
have $g'\notin L^1([0,1])$.  In this case, $\int_0^1g'(x)\,dx$ exists
as an improper Riemann integral.  However, a construction in 
\cite{jeffery} shows
how to use a Cantor set to piece together such functions so that improper Riemann integrals
do not exist but the Henstock--Kurzweil integral exists.

The Henstock--Kurzweil integral is defined in terms
of Riemann sums that are chosen somewhat more carefully than in 
Riemann integration.  It has the property that if $g'$ exists then
$\int_a^bg'(x)\,dx=g(b)-g(a)$.  In fact, if $g$ is continuous,
this fundamental theorem
of calculus formula will still hold when $g'$ fails to exist on countable
sets and certain
sets of measure zero.  See \cite{gordon}.  Conditionally convergent
integrals such as $\int_0^\infty x^2\sin(e^x)\,dx$ also exist in this
sense.
With the Henstock--Kurzweil integral there is the estimate
$|\intab f(x)g(x)\,dx|\leq \norm{f}\norm{g}_{\bv}$.  The
Alexiewicz norm of $f$ is 
$\norm{f}=\sup_{[c,d]\subset[a,b]}|\int_c^d f(x)\,dx|$.
The function $g$ must be of bounded variation and $\norm{g}_{\bv}=
\norm{g}_\infty+ Vg$, where $Vg$ is the variation of $g$.  
See \cite{lee}.

The conditions on $f$ can then be relaxed to $f''$ integrable in 
the Henstock--Kurzweil sense and we can estimate $\intab f''(x)p(x)\,dx$
using the Alexiewicz norm $\norm{f''}$.  See \cite{dingyeyang}.  
In fact, $f''$ need not
even be a function.  The same estimates hold when $f'$ is merely continuous
and then $f''$ exists in the distributional sense.  See \cite{talviladenjoy}.
Similarly if $f'$ has jump discontinuities of finite magnitude.  
See \cite{talvilaregulated}.

\end{document}